\def\M{\mathsf M}      
\def\Ms{\mathsf M _{\mathsf S}}  
\def\Cs{\mathsf C _{\mathsf{S}}} 
\def\C{\mathsf C_{\mathsf{HL}}} 
\def\Cw{\mathsf C_{\mathsf{HL},w}} 
\def\Csw{\mathsf C _{\mathsf{S},w	}} 
\theoremstyle{plain}
\newtheorem{cor}{Corollary}
\newtheorem{thm}{Theorem}
 \theoremstyle{definition}
\DeclareMathOperator*{\esssup}{ess\,sup}
\begin{document}
\title[Local H\"older continuity of Tauberian functions]{A note on local H\"older continuity of weighted Tauberian functions}
\author{Paul Hagelstein}
\address{P.H.: Department of Mathematics, Baylor University, Waco, Texas 76798}
\email{\href{mailto:paul_hagelstein@baylor.edu}{paul\!\hspace{.018in}\_\,hagelstein@baylor.edu}}
\thanks{P. H. is partially supported by a grant from the Simons Foundation (\#208831 to Paul Hagelstein).}

\author{Ioannis Parissis}
\address{I.P.: Departamento de Matem\'aticas, Universidad del Pais Vasco, Aptdo. 644, 48080
Bilbao, Spain and Ikerbasque, Basque Foundation for Science, Bilbao, Spain}
\email{\href{mailto:ioannis.parissis@ehu.es}{ioannis.parissis@ehu.es}}
\thanks{I. P. is supported by IKERBASQUE}

\subjclass[2010]{Primary 42B25, Secondary: 42B35}
\keywords{maximal function, halo function, Solyanik estimates, Tauberian conditions}

\begin{abstract}
Let $\M$ and $\Ms$  respectively denote the Hardy-Littlewood maximal operator with respect to cubes and the strong maximal operator on $\mathbb{R}^n$, and let $w$ be a nonnegative locally integrable function on $\mathbb{R}^n$.   We define the associated Tauberian functions $\Cw(\alpha)$ and $\Csw(\alpha)$ on $(0,1)$ by
\[
\Cw(\alpha) \coloneqq \sup_{\substack{E \subset \mathbb{R}^n \\ 0 < w(E) < \infty}} \frac{1}{w(E)}w(\{x \in \mathbb{R}^n : \M\chi_E(x) > \alpha\})
\]
and
\[
\Csw(\alpha) \coloneqq \sup_{\substack{E \subset \mathbb{R}^n \\ 0 < w(E) < \infty}} \frac{1}{w(E)}w(\{x \in \mathbb{R}^n : \Ms\chi_E(x) > \alpha\}).
\]
Utilizing weighted Solyanik estimates for $\M$ and $\Ms$, we show that the function $\Cw $ lies in the local H\"older class $C^{(c_n[w]_{A_{\infty}})^{-1}}(0,1)$ and $\Csw  $ lies in the local H\"older class $C^{(c_n[w]_{A_{\infty}^\ast})^{-1}}(0,1)$, where the constant $c_n>1$  depends only on the dimension $n$.
\end{abstract}
\maketitle
\raggedbottom


\section{Introduction}
This note concerns  how Solyanik estimates may be used to establish local H\"older continuity estimates for the Tauberian functions associated to the Hardy-Littlewood and strong maximal operators in the context of Muckenhoupt weights.  In \cite{hp14b}, Hagelstein and Parissis used Solyanik estimates to prove that the Tauberian  functions $\C(\alpha)$ and $\Cs(\alpha)$ associated to the Hardy-Littlewood and strong maximal operators in $\mathbb{R}^n$ both lie in the local H\"older class $C^{1/n}(1,\infty)$.   The techniques of that paper are surprisingly robust, and we here will show how the weighted Solyanik estimates for the Hardy-Littlewood and strong maximal operators obtained in \cite{hp14, hp14c} may be used to establish local H\"older smoothness estimates for the Tauberian functions of the Hardy-Littlewood and strong maximal operators in the weighted scenario.

We now briefly review what Solyanik estimates are and how they may be used to establish local smoothness estimates for Tauberian functions associated to geometric maximal operators in the setting of Lebesgue measure.   Let $\mathcal{B}$ be a collection of sets of positive measure in $\mathbb{R}^n$, and define the associated geometric maximal operator $\M_{\mathcal{B}}$ by
\[
\M_{\mathcal{B}}f(x) \coloneqq \sup_{x \in R \in \mathcal{B}}\frac{1}{|R|}\int_R |f|.
\]
For $0 < \alpha < 1$, the associated Tauberian function $\mathsf C_{\mathcal{B}}(\alpha)$ is given by
\[
\mathsf C_{\mathcal{B}}(\alpha) \coloneqq \sup_{\substack{E \subset \mathbb{R}^n \\ 0 < |E| < \infty}}\frac{1}{|E|}|\{x \in \mathbb{R}^n : \M_{\mathcal{B}}\chi_E(x) > \alpha\}|.
\]
Our ordinary expectation is that, provided $\mathcal{B}$ is a basis with reasonable differentiation properties, for $0 < \alpha < 1$ and $\alpha$ very close to 1, we should have $|\{x\in \mathbb{R}^n : \M_{\mathcal{B}}\chi_E(x)\}|$ is very close to $|E|$ itself, and accordingly that $\mathsf C_{\mathcal{B}}(\alpha)$ is very close to $1$.   Solyanik estimates provide a quantitative validation of this expectation.   In particular, we have the following theorem due to Solyanik \cite{solyanik}; see also \cite{hp13}.

\begin{thm}[Solyanik, \cite{solyanik}] \label{t1} We have the following \emph{Solyanik estimates} for the Hardy-Littlewood and the strong maximal operator:
	\begin{enumerate}
\item[(a)]  Let $\M$ denote the uncentered Hardy-Littlewood maximal operator on $\mathbb{R}^n$ with respect to cubes, and define the associated Tauberian function $\C(\alpha)$ by
\[
\C (\alpha) = \sup_{\substack{E \subset \mathbb{R}^n \\ 0 < |E| < \infty}}\frac{1}{|E|}|\{x \in \mathbb{R}^n : \M\chi_E(x) > \alpha\}|.
\]
Then for $\alpha \in (0,1) $ sufficiently close to $1$ we have
\[
\C(\alpha) - 1 \lesssim_n (1 - \alpha)^{1/n}.
\]

\item[(b)]  Let $\Ms$ denote the strong maximal operator on $\mathbb{R}^n$, and define the associated Tauberian function $\Cs(\alpha)$ by
\[
\Cs(\alpha) \coloneqq \sup_{\substack{E \subset \mathbb{R}^n \\ 0 < |E| < \infty}}\frac{1}{|E|}|\{x \in \mathbb{R}^n : \Ms\chi_E(x) > \alpha\}|.
\]
Then for $\alpha \in (0,1)$ sufficiently close to $1$ we have
\[
\Cs(\alpha) - 1 \lesssim_n (1 - \alpha)^{1/n}.
\]
\end{enumerate}
\end{thm}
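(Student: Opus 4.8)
The plan is to prove both estimates by bounding, for a fixed set $E$ with $0<|E|<\infty$, the measure of the ``fattening'' of $E$ produced by the maximal operator. Write $\Omega_\alpha \coloneqq \{x : \M\chi_E(x)>\alpha\}$ in part (a) and $\Omega_\alpha \coloneqq \{x : \Ms\chi_E(x)>\alpha\}$ in part (b). For $\alpha<1$ the Lebesgue differentiation theorem gives $E\subseteq\Omega_\alpha$ up to a null set, so it suffices to show
\[
|\Omega_\alpha\setminus E|\lesssim_n (1-\alpha)^{1/n}\,|E|.
\]
By inner regularity and the homothety invariance of both operators I would first reduce to the case where $E$ is a bounded finite union of dyadic cubes, so that all suprema defining the maximal functions are attained on genuine witnessing cubes (resp.\ rectangles).

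For part (a), the geometric mechanism responsible for the exponent $1/n$ is a boundary-layer estimate. If $x\in\Omega_\alpha\setminus E$, there is a cube $Q\ni x$ with $|Q\cap E|>\alpha|Q|$, equivalently $|Q\setminus E|<(1-\alpha)|Q|$. Setting $\rho\coloneqq \operatorname{dist}(x,E)$, the ball $B(x,\rho)$ misses $E$ entirely, and comparing $|B(x,\rho)\cap Q|$ with $|Q\setminus E|$ forces $\rho\lesssim_n (1-\alpha)^{1/n}\ell(Q)$, where $\ell(Q)$ is the sidelength of $Q$; in words, the new points sit in a layer around $E$ whose width is only a fraction $(1-\alpha)^{1/n}$ of the scale of its witnessing cube. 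The second step is to sum these contributions. A crude Vitali selection of disjoint witnessing cubes $\{Q_j\}$ with $\Omega_\alpha\subseteq\bigcup_j 3Q_j$ only yields $|\Omega_\alpha|\le 3^n\alpha^{-1}|E|$, which is an $O(1)$ bound and loses the whole point; the work is in replacing this by a sharp count. I would extract a bounded-overlap subfamily of witnessing cubes whose mild dilates still cover $\Omega_\alpha$, estimate the portion of $\Omega_\alpha\setminus E$ on each by the boundary-layer bound, and use the bounded overlap to turn the total excess into $\lesssim_n (1-\alpha)^{1/n}|E|$. Carrying out this refinement so that no dimensional power of a dilation constant survives in front of $(1-\alpha)^{1/n}$ is the main obstacle.

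Part (b) is harder, and I expect it to be the principal difficulty of the whole theorem. One has the pointwise domination $\Ms\le \M^{(1)}\circ\cdots\circ\M^{(n)}$ by one-dimensional maximal operators, but iterating the one-dimensional Solyanik estimate through the distributional inequality $|\{\M^{(k)}g>s\}|\le \mathsf C^{(1)}(s-t)\,|\{g>t\}|$ (valid for $0\le g\le 1$) forces the $n$ level gaps $s-t$ to sum to less than $1$, so each is far from $1$ and the resulting product of one-dimensional constants blows up rather than tending to $1$; the naive iteration therefore fails to reproduce the sharp exponent.

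Instead I would run the boundary-layer argument directly with rectangles. A witnessing rectangle $R$ still satisfies $|R\setminus E|<(1-\alpha)|R|$, but because rectangles are anisotropic the excess near a point $x\in\Omega_\alpha\setminus E$ is no longer comparable to $\operatorname{dist}(x,E)^n$ relative to $|R|$, and controlling the overlap of the selected rectangles requires a C\'ordoba--Fefferman type covering lemma in place of the Vitali selection used for cubes. Making this multi-parameter covering interact correctly with the anisotropic boundary-layer bound, so as to still extract the factor $(1-\alpha)^{1/n}$, is the crux; once it is in place the summation proceeds as in part (a).
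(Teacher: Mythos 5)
This theorem is not proved in the paper at all: it is quoted from Solyanik \cite{solyanik} (see also the exposition in \cite{hp13}), so there is no internal proof to compare against. Judged on its own terms, your proposal is a reasonable \emph{plan} rather than a proof. The boundary-layer computation is correct (since $B(x,\rho)\cap Q\subset Q\setminus E$ and $|B(x,\rho)\cap Q|\geq c_n\min(\rho,\ell(Q))^n$ for $x\in Q$, one gets $\rho\lesssim_n(1-\alpha)^{1/n}\ell(Q)$ once $\alpha$ is close to $1$), and your diagnoses that the Vitali bound $3^n\alpha^{-1}|E|$ is useless and that iterating one-dimensional Tauberian estimates cannot prove part (b) are both accurate. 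But the two steps you explicitly defer are the entire substance of Solyanik's argument. For (a), the assertion that one can ``extract a bounded-overlap subfamily of witnessing cubes whose mild dilates still cover $\Omega_\alpha$'' is essentially the theorem in disguise: if the selected cubes $Q_j$ have bounded overlap and $\Omega_\alpha\subset\bigcup_j(1+\epsilon)Q_j$ with $\epsilon\sim_n(1-\alpha)^{1/n}$, then $|\Omega_\alpha\setminus E|\leq\sum_j\bigl(|(1+\epsilon)Q_j|-|Q_j\cap E|\bigr)\leq\sum_j((1+\epsilon)^n-\alpha)|Q_j|\lesssim_n(1-\alpha)^{1/n}|E|$ and the proof is over. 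Producing such a selection is where the work lies: it needs a greedy selection ordered by size together with a geometric lemma quantifying how far a rejected witnessing cube can protrude beyond the cubes that caused its rejection. The pointwise boundary-layer bound alone cannot substitute for it, because without localizing the scale $\ell(Q_x)$ the set $\{y:\operatorname{dist}(y,E)\leq C_n(1-\alpha)^{1/n}\ell(Q_y)\}$ can have measure far exceeding $(1-\alpha)^{1/n}|E|$ (take $E$ to be a union of many widely separated tiny balls).

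For (b) the gap is wider still. You correctly identify that a C\'ordoba--Fefferman type selection must replace Vitali, but you give no indication of how the anisotropic protrusion estimates for rectangles are to be combined across the $n$ side lengths so that the exponent $1/n$ survives; in Solyanik's treatment this is handled by an induction on the number of distinguished directions and is the most technical part of \cite{solyanik}. In short: the scaling heuristics and the negative observations are sound, but the covering arguments that constitute the actual proof are named as obstacles rather than carried out.
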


The following theorem associated to the embedding of so-called halo sets enables us to relate Solyanik estimates to H\"older smoothness estimates.

\begin{thm}[Hagelstein, Parissis, \cite{hp14b}]\label{t2} Let $\mathcal{B}$ be a homothecy invariant collection of rectangular parallelepipeds in $\mathbb{R}^n$.  Given a set $E \subset \mathbb{R}^n$ of finite measure and $0 < \alpha < 1$, define the associated halo set $\mathcal{H}_\alpha (E)$ by
\[
\mathcal{H}_{\mathcal{B}, \alpha}(E) \coloneqq \left\{x \in \mathbb{R}^n : \M_{\mathcal{B}}\chi_E(x) > \alpha \right\}.
\]
Then for all $\alpha,\delta\in(0,1)$ with $\alpha<1-\delta$, we have
\[
\mathcal{H}_{\mathcal{B},\alpha}(E) \subset \mathcal{H}_{\mathcal{B},\alpha(1 + 2^{-(n+1)} \delta)}(\mathcal{H}_{\mathcal{B},1 - \delta}(E)).
\]
\end{thm}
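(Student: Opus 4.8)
The plan is to reduce the set inclusion to a pointwise statement about the maximal operators and then to quantify, for a single witnessing rectangle, how much of its halo at level $1-\delta$ it must contain. Fix $x\in\mathcal H_{\mathcal B,\alpha}(E)$ and abbreviate $G\coloneqq\mathcal H_{\mathcal B,1-\delta}(E)$. By the definition of $\M_{\mathcal B}$ there is a rectangle $R\in\mathcal B$ with $x\in R$ and $|E\cap R|>\alpha|R|$. It suffices to exhibit a single rectangle $R'\in\mathcal B$ with $x\in R'$ and $|G\cap R'|>\alpha(1+2^{-(n+1)}\delta)|R'|$, since this forces $\M_{\mathcal B}\chi_G(x)>\alpha(1+2^{-(n+1)}\delta)$, that is, $x\in\mathcal H_{\mathcal B,\alpha(1+2^{-(n+1)}\delta)}(G)$. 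I will take $R'$ to be a concentric dilate of $R$ by a factor $1+\eta$, with $\eta$ of order $2^{-n}\delta$ to be chosen at the end.

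The core is a Calder\'on--Zygmund stopping-time decomposition of $R$. Bisecting each side of $R$ produces $2^n$ subrectangles homothetic to $R$, hence in $\mathcal B$; iterating and stopping at the maximal subrectangles $Q_j$ for which $|E\cap Q_j|/|Q_j|>1-\delta$ yields a disjoint family with $S\coloneqq\bigcup_j Q_j\subseteq G$ (each $Q_j$ is itself a witness) and, by the Lebesgue density theorem, $E\cap R\subseteq S$ up to a null set, so that $|G\cap R|\ge|S|\ge|E\cap R|>\alpha|R|$. Two features of this family drive the improvement. First, $|S|$ itself may already exceed the target. Second, each stopped cube $Q_j$ of density $\beta_j>1-\delta$ can be thickened: its concentric dilate $(1+t_j)Q_j$ still has $E$-density above $1-\delta$ as long as $(1+t_j)^n<\beta_j/(1-\delta)$, so this whole dilate lies in $G$ and contributes a shell outside $Q_j$. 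Summing the shell volumes gives an extra contribution comparable to $\sum_j\bigl(\beta_j-(1-\delta)\bigr)|Q_j|=|E\cap R|-(1-\delta)|S|$.

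These two contributions are complementary: when $|S|$ is close to its minimum value $|E\cap R|$ the shell term is largest, whereas when $|S|$ is large it alone beats the target, so balancing the two yields a gain of order $\alpha\delta|R|$ over the trivial bound $|G\cap R|\ge\alpha|R|$, and the dimensional loss from passing to the dilate $R'=(1+\eta)R$ and from the $n$-fold dilation of the $Q_j$ produces exactly the factor $2^{-(n+1)}$. I expect the main obstacle to be the geometric bookkeeping of the shells: the dilates $(1+t_j)Q_j$ overlap one another and spill across $\partial R$, so the crux is to establish a bounded-overlap estimate for this enlarged family and to absorb the boundary shells into the slightly larger witness $R'$, all while tracking the constants tightly enough to land on $2^{-(n+1)}\delta$ rather than a worse dimensional constant.
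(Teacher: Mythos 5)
The overall architecture of your argument is sound up to a point: reducing to a single witnessing rectangle, the stopping-time decomposition by repeated bisection (the pieces are homothets of $R$, hence in $\mathcal B$), the inclusions $S\coloneqq\bigcup_j Q_j\subseteq\mathcal H_{\mathcal B,1-\delta}(E)\eqqcolon G$ and $E\cap R\subseteq S$ up to a null set, and the observation that $(1+t_j)Q_j\subseteq G$ whenever $(1+t_j)^n<\beta_j/(1-\delta)$ are all correct. The gap is exactly at the step you flag as the crux, and it is not a bookkeeping issue: the quantity $\sum_j\bigl(\beta_j-(1-\delta)\bigr)|Q_j|$ measures the \emph{total} volume of the shells, not the volume they contribute \emph{outside} $S$, and the latter can be smaller by a full factor of $\delta$. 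Take $R=[0,1]^n$, $\delta=2^{-k}$, $E=[0,1-\delta]^n$, and $\alpha$ just below $|E|=(1-\delta)^n$ (note $\alpha<1-\delta$). Since $1-\delta$ is a dyadic point of generation $k$ with odd numerator, every dyadic subcube of $R$ meeting $R\setminus E$ has $E$-density either $0$ or at most $1-2\delta$; hence the stopping cubes are precisely the maximal dyadic subcubes of $E$, $S=E$, and your first source of gain vanishes. Every $\beta_j$ equals $1$, so your formula predicts a shell contribution of order $\delta|E|$; but the only $Q_j$ whose dilates reach $R\setminus E$ are those touching a face $\{x_i=1-\delta\}$, these have side $\delta$, and each protrudes only $O(\delta^2/n)$ past that face, so the new volume inside $R$ is $O(\delta^2)$ against a required gain of $\alpha\,2^{-(n+1)}\delta|R|\approx 2^{-(n+1)}\delta$. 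Enlarging the witness to $R'=(1+\eta)R$ does not rescue this: the one macroscopic shell piece in $R'\setminus R$ comes from $[0,1/2]^n$ and has volume about $2^{-(n+1)}\delta$, which is more than cancelled by the factor $(1+\eta)^{-n}$ in the density of the enlarged witness. So in this configuration both contributions you propose to balance are $o(\delta)$.

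The example also shows where the surplus actually lives: when the stopping cubes cluster into a block their shells are absorbed by their neighbours, but the block itself --- here the single rectangle $[0,1-\delta]^n$, which has $E$-density $1$ --- can be dilated to $[0,(1-\delta)^{(n-1)/n}]^n\subseteq G\cap R$, of measure $|E|/(1-\delta)\geq(1+\delta)|E|$, and this alone beats the target with $R$ as witness. Any correct proof must therefore locate a rectangle of size comparable to $R$ with $E$-density close to the threshold and dilate \emph{that}; the proof in \cite{hp14b} (which the present paper only quotes, without reproducing the argument) obtains the gain from such a macroscopic dilation rather than from shells of Calder\'on--Zygmund stopping cubes. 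As written, your proposal does not prove the theorem.
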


An immediate corollary of this theorem is the following.

\begin{cor}[Hagelstein, Parissis, \cite{hp14b}] Let $\mathcal{B}$ be a homothecy invariant collection of rectangular parallelepipeds in $\mathbb{R}^n$ and let $\alpha,\delta\in(0,1)$. Then for $\alpha<1-\delta$ we have
\[
\mathsf C_{\mathcal{B}}(\alpha) \leq C_{\mathcal{B}}\big(\alpha (1 + 2^{-(n+1)} \delta ) \big)C_{\mathcal{B}}(1 - \delta).
\]
\end{cor}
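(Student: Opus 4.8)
The plan is to chain the halo-set embedding of Theorem \ref{t2} with the definition of the Tauberian function $\mathsf C_{\mathcal{B}}$, treating the intermediate halo set $\mathcal{H}_{\mathcal{B},1-\delta}(E)$ as a new test set. First I would fix a set $E \subset \mathbb{R}^n$ with $0 < |E| < \infty$ and abbreviate $F \coloneqq \mathcal{H}_{\mathcal{B},1-\delta}(E)$. Theorem \ref{t2} then furnishes the inclusion $\mathcal{H}_{\mathcal{B},\alpha}(E) \subset \mathcal{H}_{\mathcal{B},\alpha(1+2^{-(n+1)}\delta)}(F)$, which upon taking Lebesgue measure yields $|\mathcal{H}_{\mathcal{B},\alpha}(E)| \leq |\mathcal{H}_{\mathcal{B},\alpha(1+2^{-(n+1)}\delta)}(F)|$.

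Next I would invoke the definition of $\mathsf C_{\mathcal{B}}$ twice. Applied to the test set $F$ at the level $\alpha(1+2^{-(n+1)}\delta)$, it gives $|\mathcal{H}_{\mathcal{B},\alpha(1+2^{-(n+1)}\delta)}(F)| \leq \mathsf C_{\mathcal{B}}\big(\alpha(1+2^{-(n+1)}\delta)\big)\,|F|$; applied to $E$ at the level $1-\delta$ it gives $|F| = |\mathcal{H}_{\mathcal{B},1-\delta}(E)| \leq \mathsf C_{\mathcal{B}}(1-\delta)\,|E|$. Multiplying these two bounds and combining with the measure inequality from the first step produces $|\mathcal{H}_{\mathcal{B},\alpha}(E)| \leq \mathsf C_{\mathcal{B}}\big(\alpha(1+2^{-(n+1)}\delta)\big)\,\mathsf C_{\mathcal{B}}(1-\delta)\,|E|$. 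Dividing by $|E|$ and taking the supremum over all admissible $E$ then delivers the claimed inequality.

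Two routine points must be settled for the chain to close. I must check that the auxiliary level $\alpha(1+2^{-(n+1)}\delta)$ still lies in $(0,1)$, so that $\mathsf C_{\mathcal{B}}$ is defined there; the hypothesis $\alpha < 1-\delta$ gives $\alpha(1+2^{-(n+1)}\delta) < (1-\delta)(1+2^{-(n+1)}\delta) = 1 - \delta\big(1 - 2^{-(n+1)}(1-\delta)\big) < 1$, since $2^{-(n+1)}(1-\delta) < 1$, which suffices. I must also verify that $F$ may legitimately be used in the supremum defining $\mathsf C_{\mathcal{B}}$. If $|F| = 0$, then $\chi_F = 0$ almost everywhere, so $\mathcal{H}_{\mathcal{B},\alpha(1+2^{-(n+1)}\delta)}(F)$ is null and the embedding forces $|\mathcal{H}_{\mathcal{B},\alpha}(E)| = 0$, making the inequality trivial; otherwise $0 < |F| < \infty$, with finiteness coming from $|F| \leq \mathsf C_{\mathcal{B}}(1-\delta)\,|E|$ (the case $\mathsf C_{\mathcal{B}}(1-\delta) = \infty$ rendering the assertion vacuous), so that $F$ is a bona fide competitor.

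The only genuine obstacle is this bookkeeping: confirming that the auxiliary level remains below $1$ and that the intermediate halo set is an admissible test set. Once these are in place, the inequality is an immediate two-step consequence of Theorem \ref{t2} and the definition of the Tauberian function.
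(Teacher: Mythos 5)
Your argument is correct and is exactly the intended one: the paper presents this as an immediate consequence of Theorem \ref{t2}, obtained by taking measures in the halo-set inclusion, applying the definition of $\mathsf C_{\mathcal{B}}$ to the intermediate halo set and to $E$, and taking the supremum. Your additional bookkeeping (that the auxiliary level stays below $1$ and that $\mathcal{H}_{\mathcal{B},1-\delta}(E)$ is an admissible test set) is accurate and handles the degenerate cases the paper leaves implicit.
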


Now, we of course have that $\mathsf C_\mathcal{B}(\alpha)$ is nonincreasing on $(0,1)$.  If $\mathcal{B}$ is the collection of rectangular parallelepipeds in $\mathbb{R}^n$ whose sides are parallel to the axes (so that $\M_{\mathcal{B}} = \Ms$), we can accordingly combine the above corollary with the Solyanik estimates for $\Ms$ provided by Theorem~\ref{t1} to relatively easily obtain the following.

\begin{cor}[Hagelstein, Parissis, \cite{hp14b}]\label{c.holder} Let $\C(\alpha)$ and $\Cs(\alpha)$ respectively denote the  Tauberian functions associated to the Hardy-Littlewood maximal operator with respect to cubes and the strong maximal operator in $\mathbb{R}^n$ with respect to $\alpha$.  Then
	\[
	\C  \in C^{1/n}(0,1)\quad\text{and}\quad \Cs  \in C^{1/n}(0,1).
	\]
\end{cor}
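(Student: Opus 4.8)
The plan is to establish the estimate for a general homothecy invariant basis $\mathcal{B}$, taking $\mathcal{B}$ to be the family of axis-parallel cubes for $\C$ and the family of axis-parallel rectangular parallelepipeds for $\Cs$; both the preceding halo embedding corollary and the Solyanik estimates of Theorem~\ref{t1} are available in each case, so the two assertions are proved simultaneously. I first record two elementary facts: $\mathsf C_{\mathcal B}$ is nonincreasing on $(0,1)$, and $1 \le \mathsf C_{\mathcal B}(\alpha) < \infty$ for every $\alpha \in (0,1)$ (the lower bound because $\M_{\mathcal B}\chi_E \ge 1$ a.e.\ on $E$, the finiteness from the weak $(1,1)$ bound for cubes and from the Tauberian condition for the strong maximal operator). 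Fixing a compact subinterval $[a,b] \subset (0,1)$, monotonicity then gives $\mathsf C_{\mathcal B}(\alpha) \le \mathsf C_{\mathcal B}(a) =: M < \infty$ for all $\alpha \in [a,b]$.

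For the core estimate I take $\alpha_1 < \alpha_2$ in $[a,b]$ and choose $\delta$ so that the first argument in the halo embedding corollary hits $\alpha_2$, namely $\alpha_1(1 + 2^{-(n+1)}\delta) = \alpha_2$, equivalently $\delta = 2^{n+1}(\alpha_2-\alpha_1)/\alpha_1$. Provided $\alpha_2 - \alpha_1$ is below a threshold $\eta = \eta(n,a,b)$, this $\delta$ is small enough that simultaneously $\alpha_1 < 1-\delta$ (the hypothesis of the corollary, using $\alpha_1 \le b$) and $1-\delta$ lies in the range where Theorem~\ref{t1} applies. The corollary then yields $\mathsf C_{\mathcal B}(\alpha_1) \le \mathsf C_{\mathcal B}(\alpha_2)\,\mathsf C_{\mathcal B}(1-\delta)$, whence
\[
\mathsf C_{\mathcal B}(\alpha_1) - \mathsf C_{\mathcal B}(\alpha_2) \le \mathsf C_{\mathcal B}(\alpha_2)\bigl(\mathsf C_{\mathcal B}(1-\delta) - 1\bigr) \le M\bigl(\mathsf C_{\mathcal B}(1-\delta) - 1\bigr).
\]
Applying the Solyanik estimate $\mathsf C_{\mathcal B}(1-\delta) - 1 \lesssim_n \delta^{1/n}$ together with $\delta \le 2^{n+1}(\alpha_2-\alpha_1)/a$ bounds the right-hand side by $c_{n,a}\,M\,(\alpha_2-\alpha_1)^{1/n}$.

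Monotonicity upgrades this one-sided inequality to a genuine Hölder bound: since $\mathsf C_{\mathcal B}$ is nonincreasing we have $|\mathsf C_{\mathcal B}(\alpha_1) - \mathsf C_{\mathcal B}(\alpha_2)| = \mathsf C_{\mathcal B}(\alpha_1) - \mathsf C_{\mathcal B}(\alpha_2) \lesssim_{n,a} M\,|\alpha_1-\alpha_2|^{1/n}$ for all $\alpha_1,\alpha_2 \in [a,b]$ with $|\alpha_1-\alpha_2| < \eta$. For pairs separated by at least $\eta$, the trivial bound $|\mathsf C_{\mathcal B}(\alpha_1) - \mathsf C_{\mathcal B}(\alpha_2)| \le M - 1 \le (M-1)\eta^{-1/n}|\alpha_1-\alpha_2|^{1/n}$ gives the same kind of estimate with a larger constant. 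Hence $\mathsf C_{\mathcal B} \in C^{1/n}([a,b])$ for every compact $[a,b] \subset (0,1)$, i.e.\ $\mathsf C_{\mathcal B} \in C^{1/n}(0,1)$; specializing to cubes and to rectangular parallelepipeds gives $\C \in C^{1/n}(0,1)$ and $\Cs \in C^{1/n}(0,1)$.

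The step I expect to require the most care is the bookkeeping that keeps $\delta$ simultaneously in the admissible range of the halo embedding corollary and in the range where the Solyanik estimate of Theorem~\ref{t1} is valid; this is precisely why the argument must be localized to compact subintervals bounded away from $0$ and $1$, so that $\alpha_1$ is controlled from below and the increment may be taken small. The other input worth flagging is the finiteness of $M = \mathsf C_{\mathcal B}(a)$, which is immediate from weak $(1,1)$ in the case of cubes but rests on the Tauberian property of the basis of rectangles for the strong maximal operator; this finiteness is what tames the multiplicative factor $\mathsf C_{\mathcal B}(\alpha_2)$ appearing in the core estimate.
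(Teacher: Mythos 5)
Your proposal is correct and follows essentially the same route as the paper: the paper cites this corollary to \cite{hp14b} without reproving it, but its detailed proof of the weighted analogue (Theorem~\ref{t.weightedholder}) is exactly your argument --- choose $\delta = 2^{n+1}(\alpha_2-\alpha_1)/\alpha_1$ so the halo embedding lands on $\alpha_2$, invoke the Solyanik estimate on $\mathsf C_{\mathcal B}(1-\delta)-1$, use monotonicity and local boundedness of $\mathsf C_{\mathcal B}$ on compact subintervals, and dispose of widely separated pairs trivially. Your bookkeeping of the admissible range for $\delta$ matches the paper's choice of the threshold $\eta$.
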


The purpose of this note is to establish weighted analogues of Corollary~\ref{c.holder}. To make this precise let us consider a non-negative, locally integrable function $w$ on $\mathbb R^n$.  The relevant Tauberian functions $\Cw (\alpha)$ and $\Csw(\alpha)$ are defined on $(0,1)$ by
\[
\Cw (\alpha) \coloneqq \sup_{\substack{E \subset \mathbb{R}^n \\ 0 < w(E) < \infty}} \frac{1}{w(E)}w(\{x \in \mathbb{R}^n : \M\chi_E(x) > \alpha\})
\]
and
\[
\Csw (\alpha) \coloneqq \sup_{\substack{E \subset \mathbb{R}^n \\ 0 < w(E) < \infty}} \frac{1}{w(E)}w(\{x \in \mathbb{R}^n : \Ms\chi_E(x) > \alpha\}).
\]
It was shown in \cite{HLP} that the condition $\Cw(\alpha)<+\infty$ for \emph{some} $\alpha\in(0,1)$ already implies that $\M:L^p(w)\to L^p(w)$ for \emph{some} $1<p<\infty$ and, similarly if $\Csw(\alpha)<+\infty$ for \emph{some} $\alpha\in(0,1)$ then $\Ms:L^p(w)\to L^p(w)$ for \emph{some} $1<p<\infty$. These results pose an important restriction on the kind of functions $w$ we can consider in proving H\"older regularity estimates for $\Cw$ and $\Csw$. In particular, it is well known that the class of functions $w$ such that $\M:L^p(w)\to L^p(w)$ for some $p\in(1,\infty)$ is the Muchkenhoupt class of weights $A_\infty$; see for example \cite{GaRu}. Here we use the Fujii-Wilson definition of the Muckenhoupt class $A_\infty$. Namely, the weight $w$ belongs to the class $A_\infty$ if and only if
\[
[w]_{A_\infty} \coloneqq \sup_Q \frac{1}{w(Q)}\int_Q \M(w\chi_Q)<+\infty,
\]
where the supremum is taken with respect to all cubes in $\mathbb{R}^n$ whose sides are parallel to the axes. This description of the class $A_\infty$ goes back to Fujii, \cite{Fu}, and Wilson, \cites{W1,W2}; see also \cite{HytPer}. Thus $w\in A_\infty$ is a necessary condition for the continuity of $\Cw$ on $(0,1)$. It turns out that $w\in A_\infty$ is also a sufficient condition for the H\"older regularity of $\Cw$.

\begin{thm}\label{t.weightedholder}
Let $w \in A_{\infty}$ be a Muckenhoupt weight on $\mathbb{R}^n$.  Then
\[
\Cw   \in C^{(c_n[w]_{A_{\infty}})^{-1}}(0,1),
\]
where the constant $c_n$ depends only on the dimension $n$.
\end{thm}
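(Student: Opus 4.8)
The plan is to mimic the unweighted argument of Corollary~\ref{c.holder}, replacing Lebesgue measure by the weight $w$ at every step, and feeding in the weighted Solyanik estimate for $\M$ (from \cite{hp14}) in place of Theorem~\ref{t1}(a). The key structural ingredient is the halo-set embedding of Theorem~\ref{t2}, which is purely geometric—it concerns the sets $\mathcal H_{\mathcal B,\alpha}(E)$ themselves and says nothing about the underlying measure—so it transfers verbatim to the weighted setting. Let me think about what each piece must look like.

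Let me reconstruct the logic. Theorem~\ref{t2} with $\mathcal B$ the cubes gives, for $\alpha<1-\delta$,
\[
\{\M\chi_E>\alpha\}\subset\{\M\chi_{F}>\alpha(1+2^{-(n+1)}\delta)\},\qquad F:=\{\M\chi_E>1-\delta\}.
\]
Applying $w$, dividing by $w(E)$, and inserting the factor $w(F)/w(F)$ yields the submultiplicativity
\[
\Cw(\alpha)\le \Cw\!\big(\alpha(1+2^{-(n+1)}\delta)\big)\,\Cw(1-\delta),
\]
exactly the weighted analogue of the corollary preceding Corollary~\ref{c.holder}. The crucial input is now the \emph{weighted} Solyanik estimate: for $w\in A_\infty$ there is a bound of the shape $\Cw(1-\delta)-1\lesssim_n \delta^{(c_n[w]_{A_\infty})^{-1}}$ valid for $\delta$ small (this is the content of the $A_\infty$ Solyanik estimate proved in \cite{hp14}, whose exponent degrades with $[w]_{A_\infty}$ and accounts for the H\"older exponent in the statement). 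Combining these two facts, for $\alpha$ close to $1$,
\[
\Cw(\alpha)\le\Cw\!\big(\alpha(1+2^{-(n+1)}\delta)\big)\big(1+C_n\,\delta^{\beta}\big),\qquad \beta:=(c_n[w]_{A_\infty})^{-1}.
\]

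Now I would convert this into a local H\"older estimate. Fix $\alpha_0<\alpha_1$ in a compact subinterval of $(0,1)$ and set $\delta$ so that $\alpha_1=\alpha_0(1+2^{-(n+1)}\delta)$, i.e.\ $\delta\asymp_n(\alpha_1-\alpha_0)/\alpha_0$. Since $\Cw$ is nonincreasing (the halo sets grow as $\alpha$ decreases), the displayed inequality gives
\[
\Cw(\alpha_0)-\Cw(\alpha_1)\le \Cw(\alpha_1)\big(C_n\delta^{\beta}\big)\le C_n'\,|\alpha_1-\alpha_0|^{\beta}
\]
on the compact subinterval, where the constant absorbs the (finite) value of $\Cw(\alpha_1)$ and the constant $\alpha_0^{-\beta}$. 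Because monotone functions have one-sided control built in, this difference bound over all nearby pairs is precisely the statement that $\Cw$ lies in the local H\"older class $C^{\beta}(0,1)$ with $\beta=(c_n[w]_{A_\infty})^{-1}$, which is what we want.

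\textbf{The main obstacle} is verifying that the weighted Solyanik estimate $\Cw(1-\delta)-1\lesssim_n\delta^{(c_n[w]_{A_\infty})^{-1}}$ holds with exactly this dependence on $[w]_{A_\infty}$; everything else (submultiplicativity, the reparametrization $\alpha_1=\alpha_0(1+2^{-(n+1)}\delta)$, monotonicity) is a direct transcription of the unweighted proof and is routine. Two subtleties deserve care. First, one must check the weighted Solyanik bound is genuinely available in the power-law form $\delta^{\beta}$ rather than as an implicit modulus, since only a power law yields a H\"older class; if \cite{hp14} states it as $1+C\delta$ to a weight-dependent power this is immediate, otherwise a short interpolation/iteration of the submultiplicative inequality may be needed to extract the exponent. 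Second, the reduction to a compact subinterval is essential: the constant in the H\"older estimate blows up as $\alpha_0\to0$ through the factor $\alpha_0^{-\beta}$, so the conclusion is genuinely \emph{local} H\"older continuity on $(0,1)$, consistent with the statement. I would present the argument by first recording the weighted submultiplicativity as a short lemma, then citing the weighted Solyanik estimate, and finally combining them on an arbitrary $[\alpha_0',\alpha_1']\Subset(0,1)$.
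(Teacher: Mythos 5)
Your proposal is correct and follows essentially the same route as the paper: the halo embedding of Theorem~\ref{t2} gives the weighted submultiplicativity $\Cw(\alpha)\le \Cw(\alpha(1+2^{-(n+1)}\delta))\Cw(1-\delta)$, the reparametrization $\delta=2^{n+1}(y-x)/x$ together with the weighted Solyanik estimate (Theorem~\ref{t.weighted}, which is indeed stated in the power-law form you hoped for) yields the H\"older bound on compact subintervals, and large gaps $|y-x|$ are handled by boundedness. The only detail worth making explicit is why $\Cw$ is finite on compact subsets of $(0,1)$ away from $1$ (the paper deduces $\sup_{\alpha\in K}\Cw(\alpha)<\infty$ from the weak-type bound $\M:L^q(w)\to L^{q,\infty}(w)$ available for $w\in A_\infty$), which your argument uses when absorbing $\Cw(\alpha_1)$ into the constant.
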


Moving to the multiparameter case, the condition that $\Ms :L^p(w)\to L^p(w)$ for  some $p\in (1,\infty)$ is equivalent to the condition $w\in A_\infty ^*$, where $A_\infty ^*$ denotes the class of \emph{multiparameter} or \emph{strong} Muckenhoupt weights. A few words about how the multiparameter Muckenhoupt class $A_{\infty}^\ast$ is defined are in order here.   For $x = (x_1, \ldots, x_n)\in\mathbb R^n$ and $1 \leq j \leq n$ we may associate the point $\bar{x}^j := (x_1, \ldots, x_{j-1}, x_{j+1}, \ldots, x_n) \in \mathbb{R}^{n-1}$.  Associated to a non-negative locally integrable function $w$ on $\mathbb{R}^n$ and $\bar{x}^j$ is the one-dimensional weight
\[
w_{\bar{x}^j}(t) \coloneqq w(x_1, \ldots, x_{j-1}, t, x_{j+1}, \ldots, x_n),\qquad t \in \mathbb{R}.
\]
Then $[w]_{A_{\infty}^\ast}$ is defined by
\[
[w]_{A_{\infty}^\ast} \coloneqq \sup_{1 \leq j \leq n} \esssup_{\bar{x}^{j} \in \mathbb{R}^{n-1}}[w_{\bar{x}^j}]_{A_\infty}.
\]
Here $[\nu]_{A_{\infty}}$ denotes the standard Fujii-Wilson $A_{\infty}$ constant of a weight $\nu$ on $\mathbb{R}^1$, given by
\[
 [\nu]_{A_\infty}\coloneqq \sup_I \frac{1}{w(I)}\int_I \M_1(\nu \chi_I),
\]
where the supremum is taken over all intervals $I\subset \mathbb R$ and $\M_1$ denotes the Hardy-Littlewood maximal operator on $\mathbb{R}^1$. Thus a weight $w$ is a multiparameter Muckenhoupt weight if and only if $[w]_{A_\infty ^*}<+\infty.$ We refer the reader to \cite{hp14c} and the references therein for more details on the definition and properties of multiparameter Muckenhoupt weights.

With the definition of multiparameter Muckenhoupt weights in hand, the previous discussion shows that a necessary condition for the continuity of $\Csw$ on $(0,1)$ is that $w\in A_\infty ^*$. As in the one parameter case, we show that $w\in A_\infty ^*$ is also sufficient for the H\"older continuity of $\Csw$ on $(0,1)$.

\begin{thm}\label{t.multiweightedholder}
Let $w \in A_{\infty}^\ast$ be a multiparameter Muckenhoupt weight on $\mathbb{R}^n$.  Then
\[
\Csw   \in C^{(c_n[w]_{A_{\infty}^\ast})^{-1}}(0,1),
\]
where the constant $c_n$ depends only on the dimension $n$.
\end{thm}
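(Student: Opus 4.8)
The plan is to follow the very path that yields Corollary~\ref{c.holder}, but replacing Lebesgue measure by $w$ and the unweighted Solyanik estimate of Theorem~\ref{t1} by its weighted counterpart; it is the strong-maximal, multiparameter analogue of the argument for Theorem~\ref{t.weightedholder}. Throughout, let $\mathcal{B}$ be the collection of all rectangular parallelepipeds in $\mathbb{R}^n$ with sides parallel to the axes, so that $\M_{\mathcal{B}} = \Ms$; this collection is homothecy invariant, so Theorem~\ref{t2} applies to it. The key observation is that the halo embedding of Theorem~\ref{t2} is a purely geometric statement about the \emph{sets} $\mathcal{H}_{\mathcal{B},\alpha}(E) = \{x : \Ms\chi_E(x) > \alpha\}$, which are defined through the unweighted operator $\Ms$ and involve no weight at all. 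Consequently Theorem~\ref{t2} holds verbatim in the weighted setting.

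First I would apply the weight $w$ to both sides of the containment in Theorem~\ref{t2}. Writing $F \coloneqq \mathcal{H}_{\mathcal{B},1-\delta}(E)$ and using the definition of $\Csw$ twice — once for $F$ at the level $\alpha(1 + 2^{-(n+1)}\delta)$ and once for $E$ at the level $1-\delta$ — this gives, for all $\alpha,\delta\in(0,1)$ with $\alpha < 1-\delta$,
\[
w(\mathcal{H}_{\mathcal{B},\alpha}(E)) \le \Csw\big(\alpha(1 + 2^{-(n+1)}\delta)\big)\, w(F) \le \Csw\big(\alpha(1 + 2^{-(n+1)}\delta)\big)\, \Csw(1-\delta)\, w(E).
\]
Dividing by $w(E)$ and taking the supremum over admissible $E$ yields the weighted submultiplicative inequality
\[
\Csw(\alpha) \le \Csw\big(\alpha(1 + 2^{-(n+1)}\delta)\big)\, \Csw(1-\delta),
\]
the exact analogue of the corollary following Theorem~\ref{t2}.

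Next I would invoke the weighted Solyanik estimate for the strong maximal operator from \cite{hp14c}: for $w \in A_\infty^\ast$ there is a threshold $\delta_0 = \delta_0(n,[w]_{A_\infty^\ast})$ with
\[
\Csw(1-\delta) - 1 \lesssim_n \delta^{(c_n[w]_{A_\infty^\ast})^{-1}}, \qquad 0 < \delta < \delta_0 .
\]
This is where the specific exponent in the statement originates, and it is the only genuinely analytic ingredient; everything else is bookkeeping. I would also record that $\Csw(\alpha) < \infty$ for every $\alpha\in(0,1)$: since $w \in A_\infty^\ast$ the operator $\Ms$ is bounded on some $L^p(w)$, and Chebyshev's inequality then gives $\Csw(\alpha) \le C_p\,\alpha^{-p}$.

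Finally I would assemble the H\"older estimate. Fix a compact interval $[a,b] \subset (0,1)$ and take $\alpha_1 < \alpha_2$ in $[a,b]$; by monotonicity $\Csw(\alpha_1) \ge \Csw(\alpha_2)$. Choose $\delta \coloneqq 2^{n+1}(\alpha_2 - \alpha_1)/\alpha_1$, so that $\alpha_1(1 + 2^{-(n+1)}\delta) = \alpha_2$. When $\alpha_2 - \alpha_1$ is small enough (depending on $a,b$) we have both $\alpha_1 < 1-\delta$ and $\delta < \delta_0$, whence the submultiplicative inequality and the weighted Solyanik estimate give
\[
\Csw(\alpha_1) - \Csw(\alpha_2) \le \Csw(\alpha_2)\big(\Csw(1-\delta) - 1\big) \lesssim_n \Csw(a)\, \delta^{\gamma} \lesssim_{n,a,b} (\alpha_2 - \alpha_1)^{\gamma},
\]
where $\gamma \coloneqq (c_n[w]_{A_\infty^\ast})^{-1}$ and I used $\Csw(\alpha_2) \le \Csw(a) < \infty$ together with $\alpha_1 \ge a$; for $\alpha_2 - \alpha_1$ bounded below the estimate is trivial from the finiteness of $\Csw$ on $[a,b]$. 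This establishes $\Csw \in C^{\gamma}(0,1)$. The entire difficulty is concentrated in the weighted Solyanik estimate with its sharp $[w]_{A_\infty^\ast}$-dependence; granting that, as cited from \cite{hp14c}, the remaining work is a routine weighted transcription, the only subtlety being to check that the Solyanik threshold $\delta_0$ never interferes with local H\"older continuity — and it does not, since $\delta$ is forced to be small precisely when $\alpha_2 - \alpha_1$ is.
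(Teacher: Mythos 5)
Your proposal is correct and follows essentially the same route as the paper: the paper proves Theorem~\ref{t.weightedholder} by combining the halo embedding of Theorem~\ref{t2} (which, as you note, is weight-free and hence transfers verbatim) with the weighted Solyanik estimate and the finiteness of the Tauberian function on compact subsets of $(0,1)$, and then states that the proof of Theorem~\ref{t.multiweightedholder} is virtually identical, which is precisely the transcription you carry out with $\Ms$, $\Csw$, and Theorem~\ref{t.weightedsol}. Your explicit derivation of the weighted submultiplicative inequality and your check that the Solyanik threshold $\delta_0$ is compatible with the smallness of $\delta=2^{n+1}(\alpha_2-\alpha_1)/\alpha_1$ match the paper's argument step for step.
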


\section{Notation} We use the letters $C,c$ to denote positive numerical constants whose value might change even in the same line of text. We express the dependence of a constant $C$ on some parameter $n$ by writing $C_n$. We write $A\lesssim B$ if $A\leq C B$ for some numerical constant $C>0$. If $A\leq C_n B$ we then write $A\lesssim_n B$. In this note, $w$ will always denote a non-negative, locally integrable function on $\mathbb R^n$. Finally, we say that a function $f$ lies in the H\"older class $C^p(I)$ for some interval $I\subset \mathbb R$ if for every compact set $K\subset I$ we have  $|f(x) - f(y)| \lesssim_K |x - y|^p$ for all $x,y \in K$.  In this case we will say that $f$ is \emph{locally H\"older continuous} with exponent $p$ in $I$.

\section{Weighed Solyanik estimates and H\"older regularity}
In this section we show that the strategy for establishing H\"older smoothness estimates for $\C(\alpha)$ and $\Cs(\alpha)$ may be adapted to the weighted context.
To implement the above strategy, we need Solyanik estimates that provide us quantitative information as to how close $\Cw(\alpha)$ and $\Cs(\alpha)$ are to $1$ for $\alpha$ near $1$. Of course, the related estimates are expected to depend on $w$. Suitable Solyanik estimates in this regard were found in \cites{hp14, hp14c} when $w$ is a Muckenhoupt weight.  In particular, we have the following:
\begin{thm}[Hagelstein, Parissis, \cites{hp14,hp14c}]\label{t.weighted} Let $w\in A_\infty$. We have the Solyanik estimate
\[
 	\Cw (\alpha)-1\lesssim_n \Delta_w ^2 (1-\alpha)^{(c_n[w]_{A_\infty}) ^{-1}}\quad\text{whenever}\quad 1 > \alpha>1-e^{-c_n[w]_{A_\infty}}.
\]
Here $\Delta_w$ is the doubling constant of $w$, and $c_n$ and the implied constant depend only upon the dimension $n$.
\end{thm}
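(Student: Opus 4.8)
The plan is to reproduce, in the weighted setting, the geometric covering argument underlying the unweighted Solyanik estimate of Theorem~\ref{t1}, feeding in \emph{at the level of a single cube} the quantitative $A_\infty$ information carried by $[w]_{A_\infty}$. Since $w\,dx$ is absolutely continuous and, for every $\alpha<1$, almost every point of $E$ is a Lebesgue density point where $\M\chi_E=1>\alpha$, we have $E\subseteq A_\alpha:=\{x:\M\chi_E(x)>\alpha\}$ up to a null set, so that $w(A_\alpha)=w(E)+w(A_\alpha\setminus E)$. Thus the theorem reduces to the \emph{halo excess} bound
\[
w(A_\alpha\setminus E)\lesssim_n \Delta_w^2(1-\alpha)^{\gamma}\,w(E),\qquad \gamma:=(c_n[w]_{A_\infty})^{-1},
\]
uniformly over admissible $E$.

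The central weighted input is the sharp reverse H\"older inequality for Fujii--Wilson $A_\infty$ weights: there is a dimensional constant so that $w$ obeys a reverse H\"older bound with exponent $r_w=1+(c_n[w]_{A_\infty})^{-1}$ on every cube, a statement valid without assuming $w$ doubling. By H\"older's inequality this turns small Lebesgue density into small weighted density: for every cube $Q$ and measurable $F\subseteq Q$,
\[
\frac{w(F)}{w(Q)}\le 2\left(\frac{|F|}{|Q|}\right)^{\gamma},\qquad \gamma=\frac{r_w-1}{r_w}\asymp(c_n[w]_{A_\infty})^{-1}.
\]
I would record this as the only place the $A_\infty$ hypothesis enters, the exponent $\gamma$ being precisely what produces the final H\"older exponent.

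Next I would run the selection. Each $x\in A_\alpha$ carries a cube $Q_x\ni x$ with $|Q_x\setminus E|/|Q_x|<1-\alpha$, and a Vitali extraction yields pairwise disjoint $\{Q_j\}$ with $A_\alpha\subseteq\bigcup_j 3Q_j$ and $|Q_j\setminus E|<(1-\alpha)|Q_j|$. The power bound on each $Q_j$ gives $w(Q_j\setminus E)\le 2(1-\alpha)^{\gamma}w(Q_j)$, hence $w(Q_j)\le\big(1-2(1-\alpha)^{\gamma}\big)^{-1}w(Q_j\cap E)$, and disjointness yields $\sum_j w(Q_j)\lesssim w(E)$. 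The principal mass $w(E)$ is thereby recovered from the \emph{undilated} cubes with a multiplicative error $1+O((1-\alpha)^{\gamma})$ and no doubling loss, while the remaining, genuinely non-dyadic part of the halo is confined to the boundary annuli $\bigcup_j(3Q_j\setminus Q_j)$; on this thin layer I would invoke doubling, $w(3Q_j)\le\Delta_w^2 w(Q_j)$ (a factor $\le4$ dilation costs two doublings), so that $\Delta_w^2$ multiplies only the boundary contribution. Combining the two estimates with the cube-wise gain $(1-\alpha)^\gamma$ should produce the displayed bound.

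The hardest point is exactly this last separation: arranging the covering so that the unavoidable dilation — which forces the factor $\Delta_w^2$ — charges only the $(1-\alpha)^{\gamma}$-small boundary layer and never re-charges the full mass $w(E)$. This is the weighted incarnation of the dimensional gain in Solyanik's proof of Theorem~\ref{t1}, and making it rigorous requires controlling the overlap and the Lebesgue density of the annuli $3Q_j\setminus Q_j$ exactly as in the unweighted argument, now measured through $w$ via the power bound rather than through Lebesgue measure. Finally, the cube-wise absorption step needs $2(1-\alpha)^{\gamma}\le\tfrac12$; since $\gamma\asymp(c_n[w]_{A_\infty})^{-1}$ this is precisely the hypothesis $\alpha>1-e^{-c_n[w]_{A_\infty}}$, which also lies within the range of validity of the sharp reverse H\"older inequality, and both $c_n$ and the implied constant emerge as purely dimensional.
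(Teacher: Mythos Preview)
The paper does not prove Theorem~\ref{t.weighted}; it is quoted from \cites{hp14,hp14c} as a black-box input to the proof of Theorem~\ref{t.weightedholder}, so there is no in-paper argument to compare against. That said, your sketch contains a genuine gap that would prevent it from going through as written.

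The problem is the Vitali covering with dilation factor $3$. You describe the halo excess as ``confined to the boundary annuli $\bigcup_j(3Q_j\setminus Q_j)$'' and call this a ``thin layer,'' but it is not thin in any sense: $|3Q_j\setminus Q_j|=(3^n-1)|Q_j|$ is a fixed large multiple of $|Q_j|$, and you have no density information for $E$ on that annulus. Applying doubling yields only $w(3Q_j)\le\Delta_w^{2}w(Q_j)$, and summing gives $w(A_\alpha)\le\Delta_w^{2}\bigl(1+O((1-\alpha)^{\gamma})\bigr)w(E)$, hence $\Cw(\alpha)-1\lesssim\Delta_w^{2}-1$, which does not tend to $0$ as $\alpha\to1^{-}$. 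The ``cube-wise gain $(1-\alpha)^{\gamma}$'' that you invoke exists only on the core cubes $Q_j$, via the reverse H\"older step; it never touches the annuli, and no overlap argument for a bare Vitali family will manufacture it there. Your final paragraph appeals to ``the Lebesgue density of the annuli $3Q_j\setminus Q_j$ exactly as in the unweighted argument,'' but the unweighted Solyanik proof does not run through factor-$3$ annuli at all.

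What the actual argument in \cite{hp14} (following Solyanik's unweighted proof) does is replace Vitali by a refined selection in which the chosen cubes need only be expanded by a factor $1+O\bigl((1-\alpha)^{1/n}\bigr)$ to cover the halo. Then the annuli \emph{are} thin in Lebesgue measure, $|(1+\delta)Q_j\setminus Q_j|\lesssim_n(1-\alpha)^{1/n}|Q_j|$, and your reverse H\"older input converts this Lebesgue smallness into $w$-smallness with exponent $(c_n[w]_{A_\infty})^{-1}$; the doubling constant enters when one compares $w$ on the slightly enlarged cubes back to $w(Q_j)$ and handles the overlap of the selected family. Your identification of the sharp Fujii--Wilson reverse H\"older inequality as the source of the exponent, and of the threshold $\alpha>1-e^{-c_n[w]_{A_\infty}}$ as the absorption condition, is correct; the missing ingredient is the Solyanik-type covering lemma that makes the boundary layer small to begin with.
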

%

A multiparameter analogue of Theorem \ref{t.weighted} the following.

\begin{thm}[Hagelstein, Parissis \cite{hp14c}]\label{t.weightedsol} Let $w$ be a non-negative, locally integrable function in $\mathbb{R}^n$. If $w\in A_\infty ^*$ we have
	\[
	\Csw(\alpha)-1\lesssim_n (1-\alpha)^{(c n[w]_{A_\infty ^*})^{-1}}\quad \text{for all}\quad 1>\alpha>1-e^{-cn[w]_{A_\infty ^*}},
	\]
where $c>0$ is a numerical constant.
\end{thm}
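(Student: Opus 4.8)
The plan is to deduce the multiparameter estimate from the one-parameter weighted Solyanik estimate of Theorem~\ref{t.weighted} (in the case $n=1$), exploiting both the product structure of $\Ms$ and the coordinatewise nature of the class $A_\infty^\ast$. For $1\le j\le n$ let $M_j$ denote the one-dimensional Hardy--Littlewood maximal operator acting in the $j$-th variable,
\[
M_j f(x)\coloneqq \M_1\big(f(x_1,\dots,x_{j-1},\cdot\,,x_{j+1},\dots,x_n)\big)(x_j),
\]
and recall the standard pointwise domination $\Ms f\le M_1M_2\cdots M_n f$. In particular $\{\Ms\chi_E>\alpha\}\subset\{M_1\cdots M_n\chi_E>\alpha\}$, so it suffices to bound $w(\{M_1\cdots M_n\chi_E>\alpha\})$ by $(1+o(1))\,w(E)$ with the correct rate. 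Set $A\coloneqq[w]_{A_\infty^\ast}$ and fix a geometric sequence of levels $1-\alpha=\delta_0<\delta_1<\cdots<\delta_n<1$ with constant ratio $\delta_{k-1}/\delta_k=\rho$, where $\rho\coloneqq(1-\alpha)^{1/(2n)}$; then $\delta_n=(1-\alpha)^{1/2}<1$, and the constraint $\rho<e^{-c_1A}$ needed below corresponds exactly to the hypothesis $\alpha>1-e^{-cnA}$.

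The device that makes the iteration possible is an elementary super-level comparison. If $0\le h\le 1$ and $\gamma<\beta$, then
\[
\{M_j h>\beta\}\subseteq\big\{M_j\chi_{\{h>\gamma\}}>\beta'\big\},\qquad \beta'\coloneqq 1-\frac{1-\beta}{1-\gamma}.
\]
Indeed, if an interval $I$ in the $j$-th direction through the point satisfies $\frac{1}{|I|}\int_I h>\beta$, then $\frac{1}{|I|}\int_I(1-h)<1-\beta$; since $1-h\ge(1-\gamma)\chi_{\{h\le\gamma\}}$ this forces $\frac{1}{|I|}|I\cap\{h>\gamma\}|>\beta'$. The point is that although the intermediate functions $M_{k+1}\cdots M_n\chi_E$ are \emph{not} characteristic functions, so the Tauberian estimate cannot be applied to them directly, their super-level sets \emph{are} genuine sets, to which the one-parameter estimate does apply; the price paid is a controlled loss in the level, from $\beta$ to $\beta'$.

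With this in hand I would peel off one variable at a time. At the $k$-th step I start from the super-level set of $M_k\cdots M_n\chi_E$ at level $1-\delta_{k-1}$, apply the comparison with $\beta=1-\delta_{k-1}$ and $\gamma=1-\delta_k$ (so that $\beta'=1-\rho$), and am reduced to the super-level set of $\chi_{\{M_{k+1}\cdots M_n\chi_E>1-\delta_k\}}$ under $M_k$. Since $M_k$ acts only in $x_k$ and, by the definition of $A_\infty^\ast$, every slice weight satisfies $[w_{\bar x^k}]_{A_\infty}\le A$, applying the one-parameter case of Theorem~\ref{t.weighted} on each slice and integrating in $\bar x^k$ yields
\[
w\big(\{M_k\cdots M_n\chi_E>1-\delta_{k-1}\}\big)\le\big(1+C\,\Delta^2\,\rho^{(c_1A)^{-1}}\big)\,w\big(\{M_{k+1}\cdots M_n\chi_E>1-\delta_k\}\big),
\]
where $\Delta$ bounds the doubling constants of the slices. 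At the final step the innermost set is $\{\chi_E>1-\delta_n\}=E$, since $0<\delta_n<1$. Telescoping the $n$ inequalities and using $\rho^{(c_1A)^{-1}}=(1-\alpha)^{(2nc_1A)^{-1}}$ gives, after taking the supremum over $E$,
\[
\Csw(\alpha)-1\lesssim_n \Delta^2\,(1-\alpha)^{(cnA)^{-1}}
\]
throughout the admissible range, which already has the shape of Theorem~\ref{t.weightedsol}. Note that the factor $n$ in the exponent is precisely the number of peeling steps, forced by the requirement $\delta_n<1$.

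The main obstacle is to remove the factor $\Delta^2$ so as to obtain the clean doubling-free statement. Here I would invoke the fact that a one-dimensional $A_\infty$ weight is doubling with constant controlled by its Fujii--Wilson constant, so that $\Delta$ is uniformly bounded in terms of $A$ across all slices; since we have already restricted to $\alpha>1-e^{-cnA}$, the resulting factor can be absorbed into $(1-\alpha)^{(cnA)^{-1}}$ at the cost of adjusting the numerical constant $c$. Making this absorption produce \emph{exactly} the range $\alpha>1-e^{-cn[w]_{A_\infty^\ast}}$ is the delicate point, and is most cleanly handled by running the one-parameter step through a doubling-free version of the Solyanik estimate rather than quoting Theorem~\ref{t.weighted} verbatim; the remaining work is the bookkeeping of the levels $\delta_k$, whose geometric spacing is what simultaneously keeps each comparison level $1-\rho$ inside the range of validity of the one-parameter estimate and collapses the accumulated error into a single power of $1-\alpha$.
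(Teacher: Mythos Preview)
This theorem is not proved in the present paper; it is quoted from \cite{hp14c} and used here only as a black-box input to the proof of Theorem~\ref{t.multiweightedholder}. There is therefore no in-paper argument against which to compare your sketch.

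That said, your strategy is the natural one given the coordinatewise definition of $A_\infty^\ast$: dominate $\Ms$ by the iterated one-dimensional operator $M_1\cdots M_n$; use the elementary super-level inclusion $\{M_j h>\beta\}\subset\{M_j\chi_{\{h>\gamma\}}>1-\tfrac{1-\beta}{1-\gamma}\}$ to replace each intermediate (non-characteristic) function by a genuine set; apply the one-dimensional weighted Solyanik estimate on every slice, which is legitimate precisely because $[w_{\bar x^j}]_{A_\infty}\le[w]_{A_\infty^\ast}$ uniformly; integrate in the remaining variables; and telescope. Your geometric spacing $\delta_{k-1}/\delta_k=(1-\alpha)^{1/(2n)}$ is exactly the bookkeeping that makes all $n$ comparison levels equal to $1-\rho$ and converts the one-parameter range condition into $\alpha>1-e^{-cn[w]_{A_\infty^\ast}}$.

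The only genuine gap is the one you yourself flag: the factor $\Delta_w^2$ inherited from Theorem~\ref{t.weighted}. Notice that Theorem~\ref{t.weightedsol} as stated carries \emph{no} doubling factor, whereas your telescoped bound carries $\Delta^2$ coming from the slice weights. Your proposed absorption---``since $1-\alpha<e^{-cnA}$, fold $\Delta^2$ into $(1-\alpha)^{(cnA)^{-1}}$''---works only if $\Delta\le e^{c'A}$ for some numerical $c'$, i.e.\ only if the one-dimensional Fujii--Wilson constant controls the doubling constant exponentially; this is not automatic and needs either a proof or a reference. You are right that the cleanest route is to feed the iteration a doubling-free one-dimensional Solyanik estimate rather than Theorem~\ref{t.weighted} verbatim, but that is an additional ingredient, not a corollary of what is stated here, and your write-up should name explicitly which one-parameter estimate is being invoked at that step.
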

%
%
%
%

With these \emph{weighted Solyanik estimates} at our disposal we can now give the proof of the H\"older continuity estimates for $\Cw$ and $\Csw$.

\begin{proof}[Proof of Theorem~\ref{t.weightedholder}] Let $K$ be a compact subset in $(0,1)$ and let $m_K,M_K\in(0,1)$ be such that $m_K\leq x \leq M_k$ for all $x\in K$. Since $w\in A_\infty$ there exists some $q\in(0,1)$ such that $\M:L^q(w)\to L^{q,\infty}(w)$ and thus $\sup_{\alpha\in K} \Cw(\alpha)\lesssim_{w,n,K}1$. Furthermore, by Theorem~\ref{t.weighted} we have that
\begin{equation}\label{e.weightedsol}
	\Cw(\alpha)-1\lesssim_{w,n} (1-\alpha)^{(c_n[w]_{A_\infty})^{-1}}\quad\text{for all}\quad 1>\alpha>1-e^{-c_n[w]_{A_\infty}}\eqqcolon \alpha_o.
\end{equation}

We first consider $x,y\in K$ with $0<y-x<\min(\frac {1-M_K}{2^{n+1}}m_K,\frac{1-\alpha_o}{2^{n+1}}m_K )\eqqcolon \eta$. We can then write
\[
\Cw(x)-\Cw(y)=\Cw(x)-\Cw\Big(x\big(1+2^{n+1}\frac{y-x}{2^{n+1}x}\big)\Big).
\]
Now observe that by our choice of $x,y$ we have
\[
2^{n+1}\frac{y-x}{x}<2^{n+1}\frac {1-M_K}{2^{n+1}}m_K \frac{1}{m_K}\leq 1-M_K\leq 1-x.
\]
We can thus apply Theorem~\ref{t2} with $x$ in the role of $\alpha\coloneqq x$ and $\delta\coloneqq 2^{n+1}\frac{y-x}{x}$ to get
\[
\mathcal{H}_{\mathcal{B},x}(E) \subset \mathcal{H}_{\mathcal{B},y}(\mathcal{H}_{\mathcal{B},(1 - \delta)}(E))
\]
for all measurable $E$ where here $\mathcal{B}$ denotes the collection of all cubes in $\mathbb{R}^n$ whose sides are parallel to the axes. This immediately implies
\[
\Cw(x) \leq \Cw(y)\Cw\big(1-2^{n+1}\frac{y-x}{x}\big).
\]
Thus we can estimate
\[
\begin{split}
\Cw(x)-\Cw(y)& \leq \Cw(y)\Big[ \Cw\big(1-2^{n+1}\frac{y-x}{x}\big)-1\Big]
\\
&\lesssim_{w,n,K} \Cw\big(1-2^{n+1}\frac{y-x}{x}\big)-1
\end{split}
\]
since $\sup_{\alpha\in K} \Cw(\alpha)\lesssim_{w,n,K}1$. Noting that
\[
1>1-2^{n+1}\frac{y-x}{x}>1-2^{n+1}\frac{1-\alpha_o}{2^{n+1}x}m_K \geq \alpha_o,
\]
an appeal to \eqref{e.weightedsol} gives
\[
\Cw(x)-\Cw(y)\lesssim_{w,n,K} \big(\frac{y-x}{x}\big)^{(c_n[w]_{A_\infty})^{-1}}\lesssim_K (y-x)^{(c_n[w]_{A_\infty})^{-1}}.
\]
We have shown that
\[
\sup_{\substack{x,y\in K \\ |y-x|<\eta}} \frac{|\Cw(y)-\Cw(x)|}{|y-x|^{(c_n[w]_{A_\infty})^{-1}}}\lesssim_{w,n,K}1.
\]
On the other hand, if $x, y \in K$ with $y- x \geq \eta$ then the H\"older estimate follows trivially since $\sup_{x,y\in K} |\Cw(x)-\Cw(y)|\lesssim_{w,n,K} 1$ so we are done.
\end{proof}

%
%

The proof of Theorem~\ref{t.multiweightedholder} is virtually identical to that of Theorem~\ref{t.weightedholder}.

One may naturally wonder how sharp the above smoothness estimates are for $\Cw (\alpha)$ and $\Csw(\alpha)$.  In particular we may ask the questions: Are $\Cw(\alpha)$ and $\Csw(\alpha)$ differentiable on $(0,1)$?   Are they in fact smooth on $(0,1)$?  To the best of our knowledge, even the question of whether or not the sharp Tauberian constant $\C(\alpha)$ of the Hardy-Littlewood maximal operator on $\mathbb{R}$ in the Lebesgue setting is differentiable constitutes an unsolved problem.   All of these topics remain a subject of continuing research.

 \begin{bibsection}
 \begin{biblist}

	 \bib{HytPer}{article}{
	    author={Hyt{\"o}nen, Tuomas},
	    author={P{\'e}rez, Carlos},
	    title={Sharp weighted bounds involving $A_\infty$},
	    journal={Anal. PDE},
	    volume={6},
	    date={2013},
	    number={4},
	    pages={777--818},
	    issn={2157-5045},
	    review={\MR{3092729}},
	 }

\bib{GaRu}{book}{
   author={Garc{\'{\i}}a-Cuerva, Jos{\'e}}*{inverted={yes}},
   author={Rubio de Francia, Jos{\'e} L.}*{inverted={yes}},
   title={Weighted norm inequalities and related topics},
   series={North-Holland Mathematics Studies},
   volume={116},
   note={Notas de Matem\'atica [Mathematical Notes], 104},
   publisher={North-Holland Publishing Co.},
   place={Amsterdam},
   date={1985},
   pages={x+604},
   isbn={0-444-87804-1},
   review={\MR{807149 (87d:42023)}},
}

\bib{HLP}{article}{
		Author = {Hagelstein, P. A.},
		Author = {Luque, T.},
		Author = {Parissis, I.},
		Eprint = {1304.1015},
		Title = {Tauberian conditions, Muckenhoupt weights, and differentiation properties of weighted bases},
		Url = {http://arxiv.org/abs/1304.1015},
		journal={Trans. Amer. Math. Soc.},
		Year = {to appear}}

\bib{hp13}{article}{
  author = {Hagelstein, Paul},
  author = {Parissis, Ioannis},
  title = {Solyanik Estimates in Harmonic Analysis},
conference={
      title={Special Functions, Partial Differential Equations, and Harmonic Analysis},
   },
  date = {2014},
   book={
      series={Springer Proc. Math. Stat.},
      volume={108},
      publisher={Springer, Heidelberg},
   },
  journal = {Springer Proceedings in Mathematics \& Statistics},
  pages = {87--103},
}

\bib{hp14b}{article}{
			Author = {Hagelstein, Paul A.},
			Author = {Parissis, Ioannis},
			Eprint = {1409.3811},
			Title = {Solyanik estimates and local H\"older continuity of halo functions of geometric maximal operators},
			Url = {http://arxiv.org/abs/1409.3811},
			journal={},
			Year = {2014}}

\bib{hp14}{article}{
			Author = {Hagelstein, Paul A.},
			Author = {Parissis, Ioannis},
			Eprint = {1405.6631},
			Title = {Weighted Solyanik estimates for the Hardy-Littlewood maximal operator and embedding of $A_\infty$ into $A_p$},
			Url = {http://arxiv.org/abs/1405.6631},
			journal={to appear in J. Geom. Anal.},
			Year = {2015}}

\bib{hp14c}{article}{
			Author = {Hagelstein, Paul A.},
			Author = {Parissis, Ioannis},
			Eprint = {1410.3402},
			Title = {Weighted Solyanik estimates for the strong maximal function},
			Url = {http://arxiv.org/abs/1410.3402},
			journal={},
			Year = {2014}}

\bib{Fu}{article}{
   author={Fujii, Nobuhiko},
   title={Weighted bounded mean oscillation and singular integrals},
   journal={Math. Japon.},
   volume={22},
   date={1977/78},
   number={5},
   pages={529--534},
   issn={0025-5513},
   review={\MR{0481968 (58 \#2058)}},
}

\bib{solyanik}{article}{
   author={Solyanik, A. A.},
   title={On halo functions for differentiation bases},
   language={Russian, with Russian summary},
   journal={Mat. Zametki},
   volume={54},
   date={1993},
   number={6},
   pages={82--89, 160},
   issn={0025-567X},
   translation={
      journal={Math. Notes},
      volume={54},
      date={1993},
      number={5-6},
      pages={1241--1245 (1994)},
      issn={0001-4346},
   },
   review={\MR{1268374 (95g:42033)}},
}

\bib{W1}{article}{
   author={Wilson, J. Michael},
   title={Weighted inequalities for the dyadic square function without
   dyadic $A_\infty$},
   journal={Duke Math. J.},
   volume={55},
   date={1987},
   number={1},
   pages={19--50},
   issn={0012-7094},
   review={\MR{883661 (88d:42034)}},
}

\bib{W2}{book}{
   author={Wilson, Michael},
   title={Weighted Littlewood-Paley theory and exponential-square
   integrability},
   series={Lecture Notes in Mathematics},
   volume={1924},
   publisher={Springer, Berlin},
   date={2008},
   pages={xiv+224},
   isbn={978-3-540-74582-2},
   review={\MR{2359017 (2008m:42034)}},
}

\end{biblist}
\end{bibsection}
\end{document}